\newtheorem{theorem}{Theorem}[section]
\newtheorem{lemma}[theorem]{Lemma}
\newtheorem{remark}[theorem]{Remark}
\newtheorem{application}[theorem]{Application}
\numberwithin{equation}{section}
\newcommand{\At}{\text{At}}
\newcommand{\ad}{\text{ad}}
\newcommand{\TM}{\text{TM}}
\newcommand{\GL}{\text{GL}}
\begin{document}
\baselineskip=15.5pt

\title[homogeneous principal bundles]{A criterion for 
homogeneous principal bundles}

\author[I. Biswas]{Indranil Biswas}

\address{School of Mathematics, Tata Institute of Fundamental
Research, Homi Bhabha Road, Bombay 400005, India}

\email{indranil@math.tifr.res.in}

\author[G. Trautmann]{G\"unther Trautmann}

\address{FB Mathematik, Universit\"at Kaiserslautern,
Postfach 3049, D-67653 Kaiserslautern, Germany}

\email{trm@mathematik.uni-kl.de}

\subjclass[2000]{14L30, 14F05, 14M17}

\keywords{Homogeneous bundle, principal bundle, homogeneous space}

\date{}

\begin{abstract}
We consider principal bundles over $G/P$, where $P$ is a parabolic
subgroup of a semisimple and simply connected linear algebraic
group $G$ defined over $\mathbb C$. We prove that a holomorphic
principal $H$--bundle
$E_H\,\longrightarrow\, G/P$, where $H$ is a complex reductive group, is 
homogeneous if the adjoint vector bundle ${\rm ad}(E_H)$ is
homogeneous. Fix a faithful $H$--module $V$. We also show that $E_H$
is homogeneous if the vector bundle $E_H\times^H V$ associated
to it for the $H$--module $V$ is homogeneous.
\end{abstract}

\maketitle

\section{Introduction}\label{sec1}

Let $G$ be a semisimple and simply connected linear algebraic
group defined over $\mathbb C$ and $P\,\subset\, G$ a parabolic
subgroup. So the quotient $G/P$ is a rational complete homogeneous
variety. Let $H$ be any complex algebraic group.
A holomorphic principal $H$--bundle $E_H\, \longrightarrow\,
G/P$ is called homogeneous if the
left--translation action of $G$ on $G/P$ lifts to an action
of $G$ on $E_H$ that commutes with the right action of $H$.
The category of holomorphic homogeneous principal $H$--bundles
coincides with the category of algebraic
homogeneous principal $H$--bundles (see Lemma \ref{lem0}).

Now assume that $H$ is reductive. Fix a finite dimensional
faithful $H$--module $V$. The following is the main result proved
here (see Theorem \ref{thm1}).

\begin{theorem}\label{thm0}
A principal $H$--bundle $E_H\, \longrightarrow\,
G/P$ is homogeneous if and only if the 
associated vector bundle $E_H\times^H V$ is homogeneous.
\end{theorem}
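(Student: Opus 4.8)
The plan is to prove the two implications separately; only the converse carries content. For ``$E_H$ homogeneous $\Rightarrow$ $E_H\times^{H}V$ homogeneous'': if the left--translation action of $G$ on $G/P$ lifts to $a\colon G\times E_H\to E_H$ commuting with the right $H$--action, then $a\times\mathrm{id}_V$ on $E_H\times V$ is $H$--equivariant, hence descends to an action of $G$ on $E_H\times^{H}V=(E_H\times V)/H$ which again lifts the $G$--action on the base (and is algebraic by Lemma \ref{lem0}); so $E_H\times^{H}V$ is homogeneous.

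For the converse, put $E_V:=E_H\times^{H}V$ and assume it homogeneous. By the criterion already recorded --- that $E_H$ is homogeneous whenever $\ad(E_H)$ is --- it suffices to prove $\ad(E_H)$ homogeneous. This is where reductivity of $H$ and faithfulness of $V$ are used. Since $V$ is faithful, the inclusion of $H$--modules $\mathfrak h=\mathrm{Lie}(H)\hookrightarrow\mathfrak{gl}(V)=V\otimes V^{*}$ (intertwining the adjoint action on $\mathfrak h$ with conjugation on $\mathfrak{gl}(V)$) makes sense, and since $H$ is reductive it splits $H$--equivariantly. Applying $E_H\times^{H}(-)$ realizes $\ad(E_H)=E_H\times^{H}\mathfrak h$ as an $\mathcal O_{G/P}$--module direct summand of $E_H\times^{H}\mathfrak{gl}(V)=\End(E_V)=E_V\otimes E_V^{*}$, a bundle which is homogeneous because $E_V$ is (tensor products, duals and direct sums preserving homogeneity). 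So the theorem reduces to the statement $(\ast)$: \emph{an $\mathcal O_{G/P}$--module direct summand of a homogeneous vector bundle on $G/P$ is homogeneous.}

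Statement $(\ast)$ is the main obstacle. A homogeneous bundle $\mathcal F$ does carry a $G$--action, but the idempotent $e\in H^{0}(G/P,\End(\mathcal F))$ cutting out a prescribed summand need not be invariant for it: already the natural $\mathrm{SL}_2$--action on $\mathcal O(1)\oplus\mathcal O(-1)$ over $\mathbb P^{1}$ arising from $\mathrm{SL}_2\times^{B}(\text{standard representation})$ does not respect that splitting, although $\mathcal O(\pm 1)$ are homogeneous. Hence one cannot merely take $G$--invariants of $\End(\mathcal F)$; one must \emph{modify} the $G$--action on $\mathcal F$ --- by something differing from the original by a cocycle with values in $H^{0}(G/P,\End(\mathcal F))$ --- so as to fix $e$.

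I would do this infinitesimally, via the Atiyah sequence $0\to\End(\mathcal F)\to\At(\mathcal F)\xrightarrow{\ \pi\ }T(G/P)\to 0$. Since $G$ is semisimple and simply connected, homogeneity of $\mathcal F$ amounts to the existence of a Lie algebra homomorphism $\widetilde\rho\colon\mathfrak g\to H^{0}(G/P,\At(\mathcal F))$ with $\pi\circ\widetilde\rho$ the infinitesimal action $\rho\colon\mathfrak g\to H^{0}(G/P,T(G/P))$ (one integrates $\widetilde\rho$ over the simply connected $G$), and the summand $e\mathcal F$ is homogeneous exactly when some such $\widetilde\rho$ can be chosen to land in the Atiyah subalgebroid $\At_{e}(\mathcal F)\subset\At(\mathcal F)$ of sections annihilating $e$. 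The lifts of $\rho$ form a torsor over the space $Z^{1}\big(\mathfrak g,H^{0}(G/P,\End(\mathcal F))\big)$ of Lie algebra $1$--cocycles for the $\mathfrak g$--module structure defined by $\widetilde\rho$, and the obstruction to pushing a given $\widetilde\rho$ into $\At_{e}(\mathcal F)$ sits in a first cohomology of $\mathfrak g$ with coefficients in the finite--dimensional $\mathfrak g$--module $H^{0}(G/P,\End(\mathcal F))$ (or in its summand transverse to $e$). The delicate point --- which I expect to be where the real proof does its work --- is to pin down exactly which such group receives the obstruction, using rationality of $G/P$ to compute everything through algebraic global sections; once that is done, the obstruction vanishes because $\mathfrak g$ is semisimple, by Whitehead's first lemma ($H^{1}(\mathfrak g,M)=0$ for every finite--dimensional $\mathfrak g$--module $M$). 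This gives $(\ast)$, hence the theorem; it is in essence the same mechanism behind the $\ad(E_H)$--criterion quoted above.
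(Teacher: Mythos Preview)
Your sketch has the right ingredients --- the $H$--equivariant splitting of $\mathfrak h\hookrightarrow\End(V)$ from reductivity, Atiyah sequences, a Levi/Whitehead splitting over the semisimple $\mathfrak g$, and integration over the simply connected $G$ --- but the organization is circular relative to the paper and more roundabout than necessary. You invoke the criterion ``$\ad(E_H)$ homogeneous $\Rightarrow$ $E_H$ homogeneous'' as though it were independently available; in the paper that lemma is \emph{derived from} the proof of the present theorem (by rerunning the same argument with $\GL(V)$ replaced by $\GL(\mathfrak h)\times H/[H,H]$), so leaning on it here begs the question.

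The paper also avoids your detour through the general summand statement $(\ast)$ by applying the Atiyah--Levi mechanism directly to $E_H$. The $H$--module splitting $\End(V)\cong\mathfrak h\oplus S$ gives $\At(E_{\GL(V)})\cong\At(E_H)\oplus E_S$ compatibly with both Atiyah sequences, hence a linear projection $\sigma\colon H^0(M,\At(E_{\GL(V)}))\to H^0(M,\At(E_H))$. Homogeneity of $E_V$ supplies a Lie algebra map $\varphi\colon\mathfrak g\to H^0(M,\At(E_{\GL(V)}))$, and composing with $\sigma$ (only linear, not Lie) shows that the symbol map $\alpha\colon H^0(M,\At(E_H))\to H^0(M,TM)$ surjects onto $\beta(\mathfrak g)$. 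Then $\alpha^{-1}(\beta(\mathfrak g))$ sits in a short exact sequence of Lie algebras with kernel $H^0(M,\ad(E_H))$ and quotient $\mathfrak g$; Levi's theorem splits it, and the splitting integrates since $G$ is simply connected. This is precisely the mechanism you outline for $(\ast)$, but carried out once, for $E_H$ itself, rather than twice (once to make $\ad(E_H)$ homogeneous and then again inside the $\ad$--criterion you quote). Your route, if completed, would yield $(\ast)$ as an independent byproduct, but the ``delicate point'' you flag is exactly the step the paper executes directly, so nothing is saved.
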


The method of proof of Theorem \ref{thm0} yields the following
(see Lemma \ref{lem1}):

\begin{lemma}
A principal $H$--bundle $E_H\, \longrightarrow\,
G/P$ is homogeneous if and only if its adjoint vector bundle
${\rm ad}(E_H)$ is homogeneous.
\end{lemma}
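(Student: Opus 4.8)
The plan is to prove the two directions separately, the forward direction being essentially formal. If $E_H$ is homogeneous, then by definition the $G$-action on $G/P$ lifts to $E_H$ commuting with the right $H$-action; this lifted action induces a $G$-action on every associated bundle, in particular on $\ad(E_H) = E_H \times^H \mathfrak h$ (where $H$ acts on its Lie algebra $\mathfrak h$ via the adjoint representation), and this action covers the $G$-action on the base, so $\ad(E_H)$ is homogeneous. The content is entirely in the converse.

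For the converse, suppose $\ad(E_H)$ is homogeneous. The strategy I would adopt mirrors the expected proof of Theorem \ref{thm0}: translate homogeneity into a statement about the Atiyah algebroid or, equivalently, about equivariance with respect to the universal family of translations. Concretely, for each $g \in G$ let $L_g \colon G/P \to G/P$ be left-translation; homogeneity of a bundle is equivalent to the existence of compatible isomorphisms $L_g^* E \cong E$ varying algebraically in $g$, which can be packaged as a lift of the $G$-action. Since $G$ is connected, it suffices (by an infinitesimal/rigidity argument) to produce a lift of the $\mathfrak g$-action, i.e.\ a Lie algebra homomorphism from $\mathfrak g$ to the space of global sections of the Atiyah bundle $\At(E_H)$ of $E_H$ splitting the anchor map to global vector fields on $G/P$. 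The hypothesis gives us the analogous lift for $\ad(E_H)$; but the Atiyah bundle of the vector bundle $\ad(E_H)$ sits in an exact sequence $0 \to \End(\ad(E_H)) \to \At(\ad(E_H)) \to \TM \to 0$, whereas $\At(E_H)$ sits in $0 \to \ad(E_H) \to \At(E_H) \to \TM \to 0$. The key point is that because $H$ is reductive, the adjoint representation $\mathfrak h \to \End(\mathfrak h)$ is injective with a canonical $H$-equivariant splitting of its image coming from an invariant form (or from the decomposition $\mathfrak h = \mathfrak z \oplus [\mathfrak h, \mathfrak h]$ together with semisimplicity of the derived part), so $\ad(E_H)$ embeds as a subbundle of $\End(\ad(E_H))$, $H$-equivariantly and hence as bundles with a canonical retraction.

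The main steps, then, are: (i) recall the characterization of homogeneity of a principal bundle $E_H$ in terms of a $G$-equivariant splitting $\mathfrak g \to H^0(G/P, \At(E_H))$ of the Atiyah sequence over $H^0$, and the analogous statement for vector bundles — this is where I would lean on whatever infinitesimal criterion is established in the proof of Theorem \ref{thm0}; (ii) observe that a homogeneous structure on $\ad(E_H)$ is precisely such a splitting into $H^0(G/P, \At(\ad(E_H)))$; (iii) use reductivity of $H$ to identify $\ad(E_H)$ as a canonical direct summand, as a sheaf of Lie algebras, of $\End(\ad(E_H)) = \ad(\ad(E_H))$-type data, and more importantly to lift the infinitesimal action on $\ad(E_H)$ back to one on $E_H$; (iv) integrate the resulting $\mathfrak g$-action to a $G$-action using simple-connectedness of $G$ and completeness of $G/P$, and check it commutes with the right $H$-action.

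The hard part will be step (iii): passing from an infinitesimal action (a connection-type datum) on the associated bundle $\ad(E_H)$ back to one on the principal bundle $E_H$ itself. Knowing that $\ad(E_H)$ is homogeneous a priori only tells us that the structure group $\GL(\ad(E_H))$-bundle of frames of $\ad(E_H)$ admits a $G$-linearization; we must descend this along the adjoint homomorphism $H/Z(H) \hookrightarrow \GL(\mathfrak h)$. Here reductivity is essential twice over: it guarantees that $\mathrm{Ad}\colon H \to \GL(\mathfrak h)$ has image a closed subgroup with Lie algebra $\mathfrak h/\mathfrak z$, and — combined with the fact that $G$ is semisimple simply connected so that $H^1(G/P,\cdot)$ vanishes appropriately and central characters are controlled — it lets us lift the linearization through the finite/central kernel. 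I would handle the center separately: the induced bundle on $G/P$ with structure group $Z(H)$ is homogeneous because any character bundle over $G/P$ is (as $\mathrm{Pic}(G/P)$ carries a canonical $G$-linearization, $G$ being simply connected), and then glue. Concretely I expect to factor the argument through $H_{\mathrm{ss}} = [H,H]$ and a torus, reducing to the semisimple case where $\mathrm{ad}$ is an isomorphism onto its image, and the central torus case where homogeneity is automatic.
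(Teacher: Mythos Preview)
Your proposal is correct and takes essentially the same approach as the paper: reduce to the method of Theorem~\ref{thm1} via the Atiyah sequence, handling the failure of faithfulness of the adjoint representation (i.e.\ the center of $H$) separately using the fact that all line bundles on $G/P$ are $G$-linearizable since $G$ is simply connected. The paper packages your ``factor through $[H,H]$ and a torus, then glue'' step more cleanly by using the single homomorphism $H \to \GL(\mathfrak h) \times (H/[H,H]) =: \widetilde{H}$, which is injective on Lie algebras (the kernel $\mathfrak z(\mathfrak h)$ of $\mathrm{ad}$ maps isomorphically to the Lie algebra of the second factor), observing that the extended $\widetilde{H}$-bundle is homogeneous because both $\ad(E_H)$ and all $Z$-bundles are, and then running the proof of Theorem~\ref{thm1} verbatim with $\GL(V)$ replaced by $\widetilde{H}$.
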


\begin{application}\rm
In \cite{Sa2}, Sato proved that any infinitely extendable vector
bundle on a nested sequence of homogeneous spaces is homogeneous
(see \cite[p. 171, Main Theorem I]{Sa2} and \cite[p. 171, Main Theorem 
II]{Sa2} for the details). In view of Theorem \ref{thm1}, we
conclude that the results of \cite{Sa2} extend to principal bundles.
In particular, any principal $H$--bundle on an infinite Grassmannian
is homogeneous (see also \cite{Sa1} and \cite{DP}).
Similarly, the results of Penkov and Tikhomirov (see \cite{PT1}, 
\cite{PT2}) extend to principal bundles.
\end{application}

\section{Homogeneous principal bundles and homogeneous vector
bundles}\label{sec2}

Let $G$ be a semisimple and simply connected linear algebraic
group defined over $\mathbb C$, and let $P\, \subset\, G$
be a proper parabolic subgroup. So
\begin{equation}\label{e1}
M\,:=\, G/P
\end{equation}
is a rational complete homogeneous variety. In the following, all
morphisms, as well as all bundles on $M$,
are supposed to be in the holomorphic category; they eventually
may also be algebraic.

The left translation action of $G$ on itself defines an
action of $G$ on the quotient space $M$. 
For any $g\, \in\, G$, let
\begin{equation}\label{ef}
f_g\, :\, M\, \longrightarrow\, M
\end{equation}
be the holomorphic automorphism given by the action of $g$.

Let $H$ be a linear algebraic group defined over $\mathbb C$.
A principal $H$--bundle $E_H$ on $M$ is
called \textit{homogeneous} if the action of $G$ on $M$
lifts to a holomorphic action of $G$ on $E_H$ that commutes
with the right action of $H$. Equivalently, $E_H$ is the extension
of structure group of the principal $P$--bundle $G\,\longrightarrow
\, G/P$ by a homomorphism $P\, \longrightarrow\, H$.

A vector bundle $F$ on $M$ is called \textit{homogeneous}
if the action of $G$ on $M$ lifts to an action
of $G$ on the total space of $F$ which is linear
on the fibers. Thus a vector bundle of rank $n$ over $M$
is homogeneous if and only if the corresponding
principal $\text{GL}(n,{\mathbb C})$--bundle is homogeneous.

The following lemma shows that homogeneous principal
$H$--bundles are algebraically homogeneous.

\begin{lemma}\label{lem0}
Let $E_H\, \longrightarrow\, M$ be a homogeneous principal
$H$--bundle. Then the action of $G$ on $M$
lifts to an algebraic action of $G$ on
$E_H$ satisfying the condition that it commutes
with the right action of $H$.
\end{lemma}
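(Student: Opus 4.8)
The plan is to exhibit $E_H$ as a bundle associated to a holomorphic homomorphism of $P$, and then to prove that any such homomorphism is automatically a morphism of algebraic groups.

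First I would reduce to a homomorphism. Put $x_0\,=\,eP\,\in\,M$; the stabilizer of $x_0$ for the $G$--action is $P$, and because the given lift of the $G$--action to $E_H$ commutes with the right $H$--action, $P$ acts on the fibre $(E_H)_{x_0}$, which is an $H$--torsor. Choosing a point $z_0$ in this fibre, there is a unique $\phi(p)\,\in\,H$ with $p\cdot z_0\,=\,z_0\cdot\phi(p)$, and one checks directly that $\phi\,:\,P\,\longrightarrow\,H$ is a holomorphic homomorphism. Moreover the map $(g,h)\,\longmapsto\,(g\cdot z_0)\cdot h$ descends to a $G$--equivariant isomorphism of principal $H$--bundles $G\times^{P}H\,\xrightarrow{\ \sim\ }\,E_H$, where $P$ acts on $G\times H$ through $\phi$ and $G$ acts by left translation on the first factor. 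Hence it suffices to prove that $\phi$ is algebraic: then $G\times^{P}H$ is an algebraic principal $H$--bundle carrying an algebraic left $G$--action that commutes with the right $H$--action, and transporting this structure along the isomorphism gives the lemma.

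To prove that $\phi$ is algebraic I would use the Levi decomposition $P\,=\,L\ltimes R_u(P)$, which is algebraic, so that it is enough to treat $\phi|_{L}$ and $\phi|_{R_u(P)}$ separately. For the reductive factor $L$ this is classical: $L$ is the complexification of a maximal compact subgroup $K$, $K$ is a set of uniqueness for holomorphic functions on $L$, and every finite-dimensional continuous representation of $K$ is the restriction of a rational representation of $L$; hence $\phi|_{L}$ is algebraic. The essential point is $\phi|_{R_u(P)}$. Fix a maximal torus $T\subset L$ and a faithful algebraic representation $H\,\hookrightarrow\,\GL(W)$, and decompose $\mathfrak u\,=\,{\rm Lie}(R_u(P))\,=\,\bigoplus_{\lambda}\mathfrak u_\lambda$ into $T$--weight spaces. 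Since $G$ is semisimple and $R_u(P)$ is the unipotent radical of a parabolic, one may choose a cocharacter $\chi$ of $T$ that is dominant and regular, so that $\langle\lambda,\chi\rangle\,\geq\,1$ for every weight $\lambda$ occurring in $\mathfrak u$. As $\phi$ is a homomorphism, conjugation by $\phi(\chi(t))$ carries $d\phi(X_\lambda)\,\in\,\mathfrak{gl}(W)$ to $t^{\langle\lambda,\chi\rangle}d\phi(X_\lambda)$; consequently, for $X\,=\,\sum_\lambda X_\lambda\,\in\,\mathfrak u$ and $k\,\geq\,1$, the conjugation-invariant quantity $\operatorname{tr}\big(d\phi(X)^{k}\big)$ equals a $\mathbb C$--linear combination of strictly positive powers of $t$, and therefore vanishes. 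Thus $d\phi(\mathfrak u)$ is a Lie algebra of nilpotent operators; by Engel's theorem it is conjugate into the strictly upper triangular matrices, so $\phi(R_u(P))\,=\,\exp\big(d\phi(\mathfrak u)\big)$ lies in a unipotent algebraic subgroup $N$ of $H$. A holomorphic homomorphism $R_u(P)\,\longrightarrow\,N$ between unipotent complex algebraic groups is algebraic, being $\exp_N\circ\, d\phi\circ\log_{R_u(P)}$, a composite of morphisms of varieties. This finishes the proof that $\phi$ is algebraic, and hence the lemma.

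The step I expect to be the main obstacle is the nilpotency of $d\phi$ on ${\rm Lie}(R_u(P))$; the rest is formal. It is also the only place where the hypotheses on $G$ and $P$ enter, and they are genuinely needed: without the positivity of the $T$--weights on $R_u(P)$ the statement fails, since $t\,\mapsto\,e^{t}$ is a holomorphic but non-algebraic homomorphism $\mathbb G_a\,\longrightarrow\,\mathbb G_m$.
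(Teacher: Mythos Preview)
Your argument is correct, but the route is quite different from the paper's. The paper disposes of the lemma in two sentences: since $E_H$ is homogeneous, $f_g^*E_H$ is holomorphically, hence (by an implicit GAGA step) algebraically, isomorphic to $E_H$ for every $g\in G$; then Proposition~3.1 of \cite{Bi} is invoked to produce the algebraic lift of the $G$--action. Your proof instead realises $E_H$ as $G\times^P H$ for a holomorphic homomorphism $\phi\colon P\to H$ and shows directly that $\phi$ is algebraic, treating the Levi factor via the compact real form and the unipotent radical via the cocharacter/trace argument forcing $d\phi({\rm Lie}\,R_u(P))$ to consist of nilpotents. The paper's approach is much shorter but rests on an external black box; yours is self--contained and actually proves the sharper statement that every holomorphic homomorphism $P\to H$ is a morphism of algebraic groups. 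Your identification of the nilpotency step as the crux, together with the $\mathbb G_a\to\mathbb G_m$ counterexample, pinpoints exactly where the parabolic hypothesis is used---something the paper's proof leaves hidden inside the cited reference.
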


\begin{proof}
Since $E_H$ is homogeneous, for each $g\, \in\, G$, the pulled back
principal $H$--bundle $f^*_g E_H$ is holomorphically isomorphic to 
$E_H$, where
$f_g$ is constructed in \eqref{ef}. Therefore, $f^*_g E_H$
is algebraically isomorphic to $E_H$ for all $g\, \in\, G$.
Now from Proposition 3.1 of \cite{Bi} we conclude that the action
of $G$ on $M$ lifts to an algebraic action of $G$ on $E_H$ satisfying
the condition that it commutes with the right action of $H$.
\end{proof}

Let $H$ be a connected reductive linear algebraic
group defined over $\mathbb C$. Fix a finite
dimensional complex representation
$$
\rho_0\, :\, H\, \longrightarrow\, \text{GL}(V)
$$
such that $\text{kernel}(\rho_0)$ is a finite group. So
the homomorphism of Lie algebras induced by $\rho$
\begin{equation}\label{e3}
{\mathfrak h}\, :=\, \text{Lie}(H)\, \longrightarrow\,
\text{End}(V)
\end{equation}
is injective.

Let $E_H\, \longrightarrow\, M$ be a principal $H$--bundle.
Let
$$
E_V\, :=\, E_H\times^H V\, \longrightarrow\, M
$$
be the vector bundle associated to $E_H$ for the $H$--module
$V$ in \eqref{e3}.

\begin{theorem}\label{thm1}
Assume that the associated vector bundle $E_V$ is homogeneous.
Then the principal $H$--bundle $E_H$ is homogeneous.
\end{theorem}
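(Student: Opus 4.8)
The plan is to deduce the theorem from Lemma~\ref{lem1} by proving that the adjoint vector bundle $\ad(E_H)\,=\,E_H\times^H\mathfrak h$ is homogeneous; the homogeneity of $\ad(E_H)$ will in turn follow once we realize it as a direct summand of a homogeneous vector bundle built from $E_V$ by tensorial operations.

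First I would record two closure properties of the class of homogeneous vector bundles on $M\,=\,G/P$. (a)~If $F$ is homogeneous then so are $F^*$ and, for $F'$ homogeneous, $F\otimes F'$: the lifted fibrewise linear $G$--actions on $F$ and $F'$ induce such actions on the dual and on the tensor product. (b)~A direct summand of a homogeneous vector bundle is homogeneous. For (b) I would use the criterion behind Lemma~\ref{lem0}, namely Proposition~3.1 of \cite{Bi}: a bundle $Q$ on $M$ is homogeneous if and only if $f_g^*Q\,\cong\,Q$ for every $g\,\in\,G$. Writing a homogeneous bundle $\mathcal E$ as a direct sum $\mathcal E\,=\,\bigoplus_{i=1}^{k}\mathcal E_i$ of indecomposable bundles, for each $g\,\in\,G$ the isomorphism $f_g^*\mathcal E\,\cong\,\mathcal E$ and the uniqueness part of the Krull--Schmidt theorem produce a permutation $\pi_g$ of $\{1,\dots,k\}$ with $f_g^*\mathcal E_i\,\cong\,\mathcal E_{\pi_g(i)}$; since $g\,\mapsto\,\pi_g$ is a homomorphism from the connected group $G$ to the symmetric group $S_k$, it is trivial, so $f_g^*\mathcal E_i\,\cong\,\mathcal E_i$ for all $g$ and all $i$, and each $\mathcal E_i$ is homogeneous. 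Any direct summand of $\mathcal E$ is, again by Krull--Schmidt, a direct sum of some of the $\mathcal E_i$, hence homogeneous.

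Now, since $E_V$ is homogeneous, property (a) shows that $\End(E_V)\,=\,E_V\otimes E_V^*$ is homogeneous. Identify $\End(E_V)\,=\,E_H\times^H\End(V)$, where $H$ acts on $\End(V)$ by conjugation through $\rho_0$. The differential $d\rho_0\colon\mathfrak h\,\longrightarrow\,\End(V)$ is an injective homomorphism of $H$--modules, the injectivity because $\mathrm{kernel}(\rho_0)$ is finite and the $H$--equivariance because $\rho_0$ intertwines the adjoint action of $H$ on $\mathfrak h$ with conjugation on $\End(V)$. As $H$ is reductive, $\End(V)$ is a semisimple $H$--module, so $d\rho_0(\mathfrak h)$ has an $H$--invariant complement $C$, giving an $H$--module decomposition $\End(V)\,=\,d\rho_0(\mathfrak h)\oplus C$. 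Passing to associated bundles yields $\End(E_V)\,=\,\ad(E_H)\oplus(E_H\times^H C)$. Thus $\ad(E_H)$ is a direct summand of the homogeneous bundle $\End(E_V)$, so by (b) it is homogeneous, and Lemma~\ref{lem1} shows that $E_H$ is homogeneous.

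The step that carries the real weight is the implication invoked at the end, $\ad(E_H)$ homogeneous $\Rightarrow$ $E_H$ homogeneous, that is, Lemma~\ref{lem1}, whose non--trivial direction must reconstruct the lifted $G$--action on $E_H$ from that on $\ad(E_H)$. If one prefers not to use Lemma~\ref{lem1} as a black box, the natural route is through the Atiyah exact sequence $0\,\to\,\ad(E_H)\,\to\,\At(E_H)\,\to\,TM\,\to\,0$: a lift of the $G$--action on $M$ to $E_H$ commuting with $H$ amounts, since $G$ is simply connected, to a morphism of Lie algebroids from $\At(E_P)$ to $\At(E_H)$ over $M$ lifting $\mathrm{id}_{TM}$, where $E_P\,=\,G\,\to\,G/P$ is the tautological $P$--bundle, whose Atiyah bundle $\At(E_P)\,\cong\,M\times\mathfrak g$ is trivial. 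Once $\ad(E_H)$ is known to be homogeneous, every bundle occurring here is homogeneous, and the obstruction to building such a morphism lies in a cohomology group of a homogeneous bundle on $G/P$ which one computes, via Bott's theorem, to vanish; this both proves Lemma~\ref{lem1} and completes the argument. I expect the bookkeeping in this last obstruction computation, rather than anything in the reduction above, to be the main difficulty.
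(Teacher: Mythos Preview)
Your reduction to Lemma~\ref{lem1} via the summand $\ad(E_H)\subset\End(E_V)$ is correct in isolation, but within the paper it is circular: Lemma~\ref{lem1} is deduced \emph{from} the proof of Theorem~\ref{thm1}, not conversely. So everything rests on your independent sketch of Lemma~\ref{lem1} in the final paragraph, and that sketch has a genuine gap. You assert that the obstruction to a Lie algebroid morphism $M\times\mathfrak g\to\At(E_H)$ lifting $\mathrm{id}_{TM}$ lies in a cohomology group that vanishes by Bott. But the relevant target $H^{1}(M,\ad(E_H))$ need not vanish for a homogeneous bundle (e.g.\ $H^{1}(\mathbb P^{n},\Omega^{1})\neq 0$), and, more seriously, even once a linear lift $\mathfrak g\to H^{0}(M,\At(E_H))$ of $\beta$ exists you must still upgrade it to a \emph{Lie algebra} homomorphism, i.e.\ split
\[
0\;\longrightarrow\; H^{0}(M,\ad(E_H))\;\longrightarrow\; \alpha^{-1}(\beta(\mathfrak g))\;\stackrel{\alpha}{\longrightarrow}\;\mathfrak g\;\longrightarrow\; 0
\]
as Lie algebras. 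That is not a sheaf--cohomological obstruction and is not addressed by Bott.

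The paper handles both points without any vanishing theorem. It uses the very $H$--module decomposition $\End(V)=\mathfrak h\oplus S$ you found, but to split the Atiyah bundles: $\At(E_{\GL(V)})\cong\At(E_H)\oplus E_S$, yielding a projection $\sigma\colon H^{0}(M,\At(E_{\GL(V)}))\to H^{0}(M,\At(E_H))$ over $H^{0}(M,TM)$. The assumed $G$--action on $E_V$ already supplies a Lie algebra map $\varphi\colon\mathfrak g\to H^{0}(M,\At(E_{\GL(V)}))$ lifting $\beta$, so $\sigma\circ\varphi$ shows $\beta(\mathfrak g)\subset\alpha(H^{0}(M,\At(E_H)))$ directly --- no cohomology computation is needed. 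The displayed Lie algebra extension then splits by Levi's theorem, because $\mathfrak g$ is \emph{semisimple}; this use of the semisimplicity of $\mathfrak g$ is the key step missing from your outline. One then integrates to a homomorphism $G\to\mathcal G(E_H)$ via the simple connectedness of $G$, as you anticipated.
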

\vskip3mm

\section{Proof of Theorem \ref{thm1}}

Let $E_{\text{GL}(V)}\,\longrightarrow\, M$ be the principal
$\text{GL}(V)$--bundle corresponding to $E_V$.
By assumption, we have an action
$G\times E_{\text{GL}(V)}\, \longrightarrow\,
E_{\text{GL}(V)}$
of $G$ on $E_{\text{GL}(V)}$ that commutes with the action of 
$\text{GL}(V)$ making $E_{\text{GL}(V)}$ a homogeneous principal
$\text{GL}(V)$--bundle. Our aim is to prove that $E_H$ is 
homogeneous.

Let $\text{At}(E_{\text{GL}(V)})$ (respectively,
$\text{At}(E_H)$) be the Atiyah bundle over $M$ for the principal
$\text{GL}(V)$--bundle $E_{\text{GL}(V)}$
(respectively, principal $H$--bundle $E_H$). We
recall that $\text{At}(E_{\text{GL}(V)})$
(respectively, $\text{At}(E_H)$) is the holomorphic
vector bundle defined by the sheaf of $\text{GL}(V)$--invariant
vector fields on $E_{\text{GL}(V)}$
(respectively, $H$--invariant vector fields on $E_H$); see \cite{At}.
Using the properties of the Atiyah bundle and the injectivity 
of the homomorphism in \eqref{e3} we have the following diagram
with exact rows
\begin{equation}\label{e7}
\begin{xy}
\xymatrix{0\ar[r] & \ad(E_H)\ar[r]\ar@{_{(}->}[d] &
\At(E_H)\ar[r]\ar@{_{(}->}[d] &
\TM\ar[r]\ar@{=}[d] & 0\\
0 \ar[r] & \ad(E_{\GL(V)})\ar[r] & \At(E_{\GL(V)})\ar[r] & \TM\ar[r] & 0
}
\end{xy}
\end{equation}

Since $H$ is reductive, the homomorphism of $H$--modules
in \eqref{e3} splits. In 
other words, there is a submodule $S$ of the $H$--module
$\text{End}(V)$ such that the natural homomorphism
\begin{equation}\label{e10}
{\mathfrak h}\oplus S \, \longrightarrow\, \text{End}(V)
\end{equation}
is an isomorphism of $H$--modules. Fix such a direct summand
$S$, and let
$$
E_S\, :=\, E_H\times^H S
$$ 
be the vector bundle over $M$ associated to the principal $H$--bundle
$E_H$  for the $H$--module $S$. From \eqref{e10} we obtain an 
isomorphism
$$
\ad(E_H)\oplus E_S\,\cong\, \ad(E_{\GL(V)})
$$
of associated vector bundles. Hence from \eqref{e7} we have the
commutative diagram
\begin{equation}\label{atdia2}
\begin{xy}
\xymatrix{0\ar[r] & \ad(E_H)\oplus E_S\ar[d]_\approx\ar[r]^{\iota\oplus 
{\rm Id}_{E_S}} & \At(E_H)\oplus
E_S\ar@{-->}[d]_\approx \ar[r] & \TM\ar@{=}[d]\ar[r] & 0\\
0\ar[r] & \ad(E_{\GL(V)})\ar[r] & \At(E_{\GL(V)})\ar[r] & \TM \ar[r] & 0
}
\end{xy}
\end{equation}
where $\iota\, :\, \ad(E_H)\, \longrightarrow\, \At(E_H)$
is the inclusion in \eqref{e7}. Let
\begin{equation}\label{sigma}
\sigma\, :\, H^0(M,\, \At(E_{\GL(V)}))\, \longrightarrow\,
H^0(M,\, \At(E_H))
\end{equation}
be the surjective homomorphism induced by the projection 
$\At(E_{\GL(V)})\, \longrightarrow\, \At(E_H)$
constructed from \eqref{atdia2}.

Let $\mathfrak g$ denote the Lie algebra of $G$. Let
$$
\varphi\, :\, {\mathfrak g}\, \longrightarrow\,
H^0(M,\, \At(E_{\GL(V)})) \,~\text{~and~}~\,
\varphi_H\, :\, {\mathfrak g}\, \longrightarrow\,
H^0(M,\, \At(E_H))
$$
be the homomorphisms of Lie algebras given by the actions of
$G$ on $E_{\GL(V)}$ and $E_H$ respectively. Note that
$$\varphi_H\, =\, \sigma\circ\varphi\, ,$$ where $\sigma$ is
constructed in \eqref{sigma}.
We have the following commutative diagram of Lie algebras
\begin{equation}\label{liedia}
\begin{xy}
\xymatrix{
{\mathfrak g}\ar[r]^-\varphi\ar[dr]_\beta & 
H^0(M,\At(E_{\GL(V)})\ar[r]^\sigma\ar[d] & H^0(M,\At(E_H))\ar[dl]^\alpha\\
& H^0(M,TM) & 
}
\end{xy}
\end{equation}
where $\beta$ is the injective Lie algebra 
homomorphism induced by the natural action of $G$ on $G/P$,
and $\varphi$, $\sigma$ are defined
above; the remaining two homomorphisms are obtained from \eqref{e7}.
Consequently,
$\beta ({\mathfrak g})\,\subset\, \alpha(H^0(M,\At(E_H)))$. Defining
$$
\widetilde{H^0(M,\At(E_H))}\, :=\, \alpha^{-1}(\beta({\mathfrak g}))\, ,
$$
{}from \eqref{liedia} and \eqref{e7}
we have a short exact sequence of Lie algebras
\begin{equation}\label{e17}
0\, \longrightarrow\, H^0(M,\, \text{ad}(E_H))\, \longrightarrow
\, \widetilde{H^0(M,\, \text{At}(E_H))}
\,\stackrel{\alpha}{\longrightarrow}
\,{\mathfrak g}\, \longrightarrow\, 0\, .
\end{equation}

Since
${\mathfrak g}$ is semisimple, there is a homomorphism of
Lie algebras
\begin{equation}\label{e18}
\widehat{\alpha}\, :\, {\mathfrak g}\, \longrightarrow\,
\widetilde{H^0(M,\, \text{At}(E_H))}
\end{equation}
such that $\alpha\circ \widehat{\alpha}\,=\, \text{Id}_{\mathfrak
g}$;
see \cite[p. 91, Corollaire 3]{Bo}. Fix such a splitting
$\widehat{\alpha}$.

Let ${\mathcal G}(E_H)$ denote the group of all biholomorphisms of
$E_H$ that commute with the right action of $H$. It is a complex Lie
group, and its Lie algebra coincides with $H^0(M,\, \text{At}(E_H))$
(see \cite{Bi} for a proof).
The group $G$ being simply connected, the homomorphism of
Lie algebras $\widehat{\alpha}$ in \eqref{e18} lifts
to a homomorphism
$$
\rho\, :\, G\, \longrightarrow\, {\mathcal G}(E_H)\, .
$$
Since $\alpha\circ \widehat{\alpha}\,=\, \text{Id}_{\mathfrak
g}$, it follows that the action of $G$ on $E_H$ defined
by $\rho$ makes $E_H$ a homogeneous principal $H$--bundle.
This completes the proof of Theorem \ref{thm1}.

\section{Adjoint bundle criterion}

The above proof of Theorem \ref{thm1} also gives the following lemma.

\begin{lemma}\label{lem1}
Assume that the adjoint vector bundle ${\rm ad}(E_H)$ is
homogeneous. Then $E_H$ is homogeneous.
\end{lemma}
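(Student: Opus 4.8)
\textbf{Proof proposal for Lemma \ref{lem1}.}
The plan is to mirror the proof of Theorem \ref{thm1}, replacing the auxiliary vector bundle $E_V$ by the adjoint bundle itself and using the homogeneity of $\ad(E_H)$ in place of the homogeneity of $E_{\GL(V)}$. First I would observe that homogeneity of $\ad(E_H)$ means that the $G$--action on $M$ lifts to a linear action on $\ad(E_H)$, equivalently the principal bundle $E_{\GL(\mathfrak h)}$ associated to $\ad(E_H)$ via the adjoint representation $H\to \GL(\mathfrak h)$ is homogeneous; this gives a Lie algebra homomorphism ${\mathfrak g}\to H^0(M,\At(E_H))$ after I show that the natural map $\At(E_H)\to \At(E_{\GL(\mathfrak h)})$ behaves suitably. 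The key structural point, replacing the splitting \eqref{e10}, is that since $H$ is reductive the adjoint action of $H$ on $\End(\mathfrak h)$ contains $\ad\colon\mathfrak h\to\End(\mathfrak h)$ as a direct summand (the kernel of $\ad$ is the center $\mathfrak z$ of $\mathfrak h$, so one must be slightly careful, but one can instead work with the semisimple part or simply note that $\mathfrak h\to\End(\mathfrak h)$ factors as $\mathfrak h\to\mathfrak h/\mathfrak z\hookrightarrow\End(\mathfrak h)$, and complement the image). Thus, as in \eqref{atdia2}, I obtain $\At(E_H)\oplus E_S\cong \At(E_{\GL(\mathfrak h)})$ for a suitable associated bundle $E_S$, hence a surjection $\sigma\colon H^0(M,\At(E_{\GL(\mathfrak h)}))\to H^0(M,\At(E_H))$ compatible with the projections to $H^0(M,TM)$.

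With this in hand, the rest of the argument is verbatim the proof of Theorem \ref{thm1}: the homogeneity datum on $\ad(E_H)$ furnishes $\varphi\colon{\mathfrak g}\to H^0(M,\At(E_{\GL(\mathfrak h)}))$ lifting the $G$--action on $M$, composing with $\sigma$ gives $\varphi_H=\sigma\circ\varphi\colon{\mathfrak g}\to H^0(M,\At(E_H))$, and forming the preimage $\widetilde{H^0(M,\At(E_H))}:=\alpha^{-1}(\beta({\mathfrak g}))$ yields a short exact sequence of Lie algebras
\begin{equation*}
0\longrightarrow H^0(M,\ad(E_H))\longrightarrow \widetilde{H^0(M,\At(E_H))}\stackrel{\alpha}{\longrightarrow}{\mathfrak g}\longrightarrow 0
\end{equation*}
exactly as in \eqref{e17}. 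Semisimplicity of ${\mathfrak g}$ gives a Lie algebra splitting $\widehat{\alpha}\colon{\mathfrak g}\to\widetilde{H^0(M,\At(E_H))}$ by \cite[p.\ 91, Corollaire 3]{Bo}, and since $G$ is simply connected this integrates to a homomorphism $\rho\colon G\to{\mathcal G}(E_H)$; because $\alpha\circ\widehat{\alpha}=\mathrm{Id}_{\mathfrak g}$, the resulting $G$--action on $E_H$ covers the action on $M$ and commutes with the right $H$--action, so $E_H$ is homogeneous.

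The one genuinely new point — the main obstacle — is the replacement of the faithful module $V$ by the adjoint module $\mathfrak h$, which need not be faithful when $H$ has a nontrivial center. The clean fix is to note that homogeneity is a statement about the image of $H$ in its automorphism group acting on the bundle, and the relevant map $\At(E_H)\to\At(E_{\GL(\mathfrak h)})$ is still injective on $\ad(E_H)$ precisely because $\ad\colon\mathfrak h\to\End(\mathfrak h)$ has kernel the center $\mathfrak z$: one replaces the bottom row of \eqref{e7} by the Atiyah sequence of $E_{\GL(\mathfrak h)}$ and checks that $\ad(E_H)\to\ad(E_{\GL(\mathfrak h)})$ has kernel the bundle $E_H\times^H\mathfrak z$ associated to the trivial $H$--module $\mathfrak z$, i.e.\ a trivial bundle, which contributes only the abelian factor and does not affect the integration step since the abelian part of the action on $E_H$ can be taken trivial. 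Alternatively, and more simply, one deduces Lemma \ref{lem1} directly from Theorem \ref{thm1}: if $\ad(E_H)$ is homogeneous then so is the $\GL(\mathfrak h)$--bundle it defines, and taking $V$ to be any faithful $\GL(\mathfrak h)$--module one is reduced to the previously proven case — but I prefer the self-contained argument above, which makes transparent that only the direct-summand property of the relevant $H$--module was used in the proof of Theorem \ref{thm1}.
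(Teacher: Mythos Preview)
Your overall plan --- rerun the proof of Theorem \ref{thm1} with $V$ replaced by the adjoint module --- is exactly right, and is what the paper does. The gap is in how you handle the center. When $\mathfrak z:=z(\mathfrak h)\neq 0$, the map $\ad\colon\mathfrak h\to\End(\mathfrak h)$ is \emph{not} injective, so the vertical arrows in the analogue of \eqref{e7} are not monomorphisms, and your claimed isomorphism $\At(E_H)\oplus E_S\cong \At(E_{\GL(\mathfrak h)})$ is simply false: the natural map $\At(E_H)\to\At(E_{\GL(\mathfrak h)})$ has kernel the trivial bundle $E_H\times^H\mathfrak z$. Without that isomorphism you do not get the projection $\sigma$, and hence you do not obtain the surjectivity of $\alpha$ in the sequence \eqref{e17}; your sentence ``the abelian part of the action on $E_H$ can be taken trivial'' does not repair this, because the issue is producing any lift at all of $\beta(\mathfrak g)$ into $H^0(M,\At(E_H))$. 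Your ``alternative'' is circular: applying Theorem \ref{thm1} to $E_H$ requires a \emph{faithful} $H$--module $V$ with $E_V$ homogeneous, and $\mathfrak h$ is not faithful, while passing to a faithful $\GL(\mathfrak h)$--module only tells you the $\GL(\mathfrak h)$--bundle is homogeneous, which was your hypothesis.

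The paper's remedy is precisely to restore injectivity on Lie algebras by augmenting the target group: set $Z:=H/[H,H]$ and use the homomorphism $H\to\widetilde H:=\GL(\mathfrak h)\times Z$, whose differential $\mathfrak h\to\End(\mathfrak h)\oplus\mathfrak z$ is injective since $\mathfrak h=[\mathfrak h,\mathfrak h]\oplus\mathfrak z$. The extended bundle $E_{\widetilde H}$ is then homogeneous because its $\GL(\mathfrak h)$--component is $\ad(E_H)$ (homogeneous by hypothesis) and its $Z$--component is a torus bundle on $G/P$, which is automatically homogeneous since $G$ is simply connected and every line bundle on $G/P$ is homogeneous. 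With $E_{\GL(V)}$ replaced by $E_{\widetilde H}$, the proof of Theorem \ref{thm1} now goes through verbatim. So the missing idea is exactly this use of the simple connectedness of $G$ to supply, for free, the homogeneity of the ``center'' piece that your argument tried to discard.
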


\begin{proof}
Define $Z\, :=\, H/[H\, ,H]$. Any holomorphic principal $Z$--bundle
on $G/P$ is homogeneous because $Z$ is a product of copies
of ${\mathbb C}^*$ and $G$ is simply connected (so any line bundle
on $G/P$ is homogeneous).
The $H$--module ${\mathfrak h}$ decomposes as
$$
{\mathfrak h}\,=\, [{\mathfrak h}\, ,{\mathfrak h}]\oplus
z({\mathfrak h})\, ,
$$
where $z({\mathfrak h})$ is the Lie algebra of $Z$. The
adjoint homomorphism ${\mathfrak h}\,\longrightarrow\,
\text{End}_{\mathbb C}({\mathfrak h})$ is injective. So,
the homomorphism of Lie algebras corresponding to the
homomorphism
$$
H\, \longrightarrow\, \text{GL}({\mathfrak h})\times Z\, =:
\, \widetilde{H}
$$
is injective. Let $E_{\widetilde{H}}$ be the principal
$\widetilde{H}$--bundle on $M$ obtained by extending the
structure group of $E_H$ using the above homomorphism.
Since ${\rm ad}(E_H)$ and all holomorphic
line bundles on $G/P$ are homogeneous, and $Z$ is a product
of copies of ${\mathbb C}^*$, it follows that $E_{\widetilde{H}}$
is homogeneous. After replacing the principal
bundle $E_{\text{GL}(V)}$ by
$E_{\widetilde{H}}$, it is straight--forward to check that the
proof of Theorem \ref{thm1} also gives a proof of the lemma.
\end{proof}

\begin{remark}\label{trivial}
{\rm Let $E_H$ be a principal $H$--bundle on $M$, and let $H\,
\longrightarrow\, {\rm GL}(V)$ be a faithful representation
as in \eqref{thm1}. If the associated vector bundle $E_V$ 
is trivial, then it can be shown that $E_H$ itself is trivial.
To prove this, consider the 
induced morphism $E_H\,\longrightarrow\, E_{{\rm GL}(V)}$. The
principal ${\rm GL}(V)$--bundle $E_{{\rm GL}(V)}$ is trivial
because $E_V$
is trivial. Since $M$ is complete and ${\rm GL}(V)/H$ affine, there 
are no
non-constant maps from $M$ to ${\rm GL}(V)/H$. This implies that 
there are trivializing sections of $E_{{\rm GL}(V)}$ which factor 
through $E_H$.
By this remark, the result of \cite{PT1} on the triviality of 
vector bundles on twisted ind--Grassmannians can be extended to
principal bundles. (See \cite{BCT} for
principal bundles on projective spaces.)}
\end{remark}


\end{document}